\DeclareMathOperator{\lspan}{span}                          
\DeclareMathOperator{\supp}{supp}                           
\DeclareMathOperator{\rad}{rad}                             
\DeclareMathOperator{\Lip}{Lip}                             
\newcommand{\NN}{\mathbb{N}}             
\newcommand{\ZZ}{\mathbb{Z}}             
\newcommand{\RR}{\mathbb{R}}             
\newcommand{\abs}[1]{\left|{#1}\right|}                     
\newcommand{\pare}[1]{\left({#1}\right)}                    
\newcommand{\set}[1]{\left\{{#1}\right\}}                   
\newcommand{\norm}[1]{\left\|{#1}\right\|}                  
\newcommand{\dual}[1]{{#1}^\ast}                            
\newcommand{\ddual}[1]{{#1}^{\ast\ast}}                     
\newcommand{\ball}[1]{B_{{#1}}}                             
\newcommand{\duality}[1]{\left<{#1}\right>}                 
\newcommand{\cl}[1]{\overline{#1}}                          
\newcommand{\restrict}{\mathord{\upharpoonright}}           
\newcommand{\lipfree}[1]{\mathcal{F}({#1})}                 
\newcommand{\lipnorm}[1]{\norm{#1}_L}                       
\newcommand{\pos}[1]{#1^+}                                  
\newcommand{\wop}[1]{T_{#1}}                                
\newcommand{\bidualfree}[1]{\dual{\Lip_0({#1})}}            
\newcommand{\biduallip}[1]{\ddual{\Lip_0({#1})}}            
\theoremstyle{plain}
\newtheorem{theorem}{Theorem}
\newtheorem{lemma}[theorem]{Lemma}
\newtheorem{claim}{Claim}
\newtheorem*{claim*}{Claim}
\theoremstyle{definition}
\newtheorem*{definition*}{Definition}
\newtheorem{definition}[theorem]{Definition}
\theoremstyle{remark}
\begin{document}

\title{Normal functionals on Lipschitz spaces are weak$^\ast$ continuous}

\author[R. J. Aliaga]{Ram\'on J. Aliaga}
\address[R. J. Aliaga]{Instituto Universitario de Matem\'atica Pura y Aplicada, Universitat Polit\`ecnica de Val\`encia, Camino de Vera S/N, 46022 Valencia, Spain}
\email{raalva@upvnet.upv.es}

\author[E. Perneck\'a]{Eva Perneck\'a}
\address[E. Perneck\'a]{Faculty of Information Technology, Czech Technical University in Prague, Th\'akurova 9, 160 00, Prague 6, Czech Republic}
\email{perneeva@fit.cvut.cz}

\date{} 


\begin{abstract}
Let $\mathrm{Lip}_0(M)$ be the space of Lipschitz functions on a complete metric space $M$ that vanish at a base point. We prove that every normal functional in ${\mathrm{Lip}_0(M)}^*$ is weak$^*$ continuous, i.e. in order to verify weak$^*$ continuity it suffices to do so for bounded monotone nets of Lipschitz functions. This solves a problem posed by N. Weaver. As an auxiliary result, we show that the series decomposition developed by N. J. Kalton for functionals in the predual of $\mathrm{Lip}_0(M)$ can be partially extended to ${\mathrm{Lip}_0(M)}^*$.

\

\noindent
\textit{Keywords:} Lipschitz-free space; Lipschitz function; Lipschitz space; normal functional

\

\noindent
2020 \textit{Mathematics subject classification:} Primary 46B20; Secondary 46E15

\end{abstract}

\maketitle


\section{Introduction}

Let $(M,d)$ be a complete metric space with a selected base point, which we shall denote by $0$. Then the space $\Lip_0(M)$ of all real-valued Lipschitz functions on $M$ that vanish at $0$ is a Banach space when endowed with the norm given by the Lipschitz constant
$$
\lipnorm{f}=\sup\set{\frac{\abs{f(x)-f(y)}}{d(x,y)}:x\neq y\in M}
$$
(the requirement that $f(0)=0$ gets rid of the 
constant functions, otherwise $\lipnorm{\cdot}$ is merely a seminorm).
Moreover, $\Lip_0(M)$ is a dual Banach space. Its canonical predual $\lipfree{M}$, usually called \emph{Lipschitz-free space} or \emph{Arens-Eells space} over $M$, can be realized as the subspace $\lipfree{M}=\cl{\lspan}\set{\delta(x):x\in M}$ of $\dual{\Lip_0(M)}$, where $\delta(x)\in\dual{\Lip_0(M)}$ denotes the evaluation functional on $x\in M$. Note that $\delta$ is an isometric embedding of $M$ into $\dual{\Lip_0(M)}$, so $\lipfree{M}$ contains a linearly dense and linearly independent isometric copy of $M$.

The Lipschitz spaces $\Lip_0(M)$ are in many ways the metric counterparts of the classical $C(K)$ spaces of real-valued continuous functions on Hausdorff compacts, so their study is interesting in its own right; for a detailed analysis of their properties, see the reference monograph \cite{Weaver2} by Weaver. However, they currently attract a lot of attention due to their applications to the nonlinear geometry of Banach spaces. These usually involve the following extension property satisfied by Lipschitz-free spaces: \textit{any Lipschitz mapping from $M$ into a Banach space $X$ can be extended to a linear operator from $\lipfree{M}$ into $X$ whose norm is the Lipschitz constant of the original mapping} (here, each $x\in M$ is identified with its associated evaluation functional $\delta(x)\in\lipfree{M}$). In \cite{GoKa_2003}, Godefroy and Kalton famously used this to prove that the bounded approximation property of Banach spaces is stable under Lipschitz isomorphisms. Since then, numerous other applications to nonlinear functional analysis have been found; see e.g. the recent survey \cite{Godefroy_2015} by Godefroy.

The weak$^\ast$ topology induced by $\lipfree{M}$ on $\Lip_0(M)$ coincides with the topology of pointwise convergence on norm-bounded subsets of $\Lip_0(M)$. Therefore, by a straightforward application of the Banach-Dieudonn\'e theorem, a functional  $\phi\in\bidualfree{M}$ is weak$^\ast$ continuous (i.e. it belongs to $\lipfree{M}$) precisely when it satisfies the following condition: given any norm-bounded net $(f_i)$ in $\Lip_0(M)$ that converges pointwise to $f\in\Lip_0(M)$, one has that $\duality{f_i,\phi}$ converges to $\duality{f,\phi}$.

In \cite{Weaver}, Weaver considered the following weaker notion, by analogy with the corresponding notion for von Neumann algebras:
\begin{definition}
A functional $\phi\in\bidualfree{M}$ is \emph{normal} when it satisfies the following: given any norm-bounded net $(f_i)$ in $\Lip_0(M)$ that converges pointwise \emph{and monotonically} to $f\in\Lip_0(M)$, one has that $\duality{f_i,\phi}$ converges to $\duality{f,\phi}$.
\end{definition}
\noindent Equivalently, $\phi$ is normal if $\duality{f_i,\phi}\rightarrow 0$ for any net $(f_i)$ of non-negative functions in $\ball{\Lip_0(M)}$ that decreases pointwise to $0$.

By a well-known theorem, states on a von Neumann algebra are normal if and only if they belong to its predual (see e.g. \cite[Theorem 1.13.2]{Sakai}). In particular, since normality only depends on the order structure of the von Neumann algebra, this implies that von Neumann algebras have unique preduals \cite[Corollary 1.13.3]{Sakai}. In our setting, any weak$^\ast$ continuous element of $\bidualfree{M}$ is obviously normal.
Weaver asked in \cite[Open problem on p. 37]{Weaver} whether the converse is also true. He first gave an affirmative answer for the very specific case of evaluation functionals on elements of the Stone-\v Cech compactification of $M$ \cite[Proposition 2.1.6]{Weaver} and for weak$^\ast$ limits of nets of elementary molecules \cite[Theorem 3.43]{Weaver2}. Later, he extended the result to all positive functionals \cite[Theorem 2.3]{Weaver_2018}, i.e. those $\phi\in\bidualfree{M}$ such that $\duality{f,\phi}\geq 0$ for any non-negative $f\in\Lip_0(M)$. This allowed him to show, similarly to von Neumann algebras, that the Lipschitz-free space $\lipfree{M}$ is in fact the unique predual of $\Lip_0(M)$ when $M$ is bounded or geodesic \cite{Weaver_2018}. It is currently an open problem whether this holds for all metric spaces $M$.

In this short note, we settle the question about normality in the general case:

\begin{theorem}
\label{th:normal}
Let $M$ be a complete pointed metric space and $\phi\in\bidualfree{M}$. Then $\phi$ is normal if and only if it is weak$^\ast$ continuous.
\end{theorem}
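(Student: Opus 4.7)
The ``only if'' direction is trivial, so assume $\phi\in\bidualfree{M}$ is normal; the goal is to show $\phi\in\lipfree{M}$. The natural strategy is to reduce to Weaver's theorem for positive normal functionals \cite[Theorem 2.3]{Weaver_2018} by producing a decomposition $\phi=\phi^+-\phi^-$ as a difference of positive normal functionals in $\bidualfree{M}$, and then invoking that theorem for each piece.

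The candidate positive part, motivated by Riesz space theory applied to the vector lattice $\Lip_0(M)$ (pointwise $\vee$ and $\wedge$), is
$$
\phi^+(f) := \sup\set{\duality{g,\phi} : g\in\Lip_0(M),\ 0\leq g\leq f} \qquad (f\geq 0),
$$
extended by linearity to $\Lip_0(M)$ via $f=f^+-f^-$. The main obstacle is that this supremum need not be finite a priori: although $g\in[0,f]$ is pointwise controlled by $f$, the Lipschitz constant $\lipnorm{g}$ can be arbitrarily large (one can insert oscillations of the same sign as $f$), so continuity of $\phi$ alone does not bound $\phi$ on $[0,f]$ and does not give order boundedness in the sense of Riesz spaces. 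The crux of the argument is therefore to use the normality hypothesis in an essential way to constrain $\phi$ on order intervals. I would argue by contradiction: assuming the supremum is infinite, extract $g_n\in[0,f]$ with $\duality{g_n,\phi}\to\infty$, rescale as $h_n=g_n/\lambda_n$ for a suitable divergent $\lambda_n$, and form the bounded monotone net $k_N:=\bigvee_{n\leq N}h_n \in [0,f]$; since $\lipnorm{\bigvee_{n\leq N}h_n}\leq\max_{n\leq N}\lipnorm{h_n}$, one can choose the $\lambda_n$ to keep $\lipnorm{k_N}$ bounded, producing a monotone bounded net along which $\duality{k_N,\phi}$ blows up, contradicting normality.

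Once $\phi^+$ is shown to be well defined (finite) on every $f\geq 0$, the usual verifications give that $\phi^+$ is additive and positively homogeneous on the positive cone, hence extends to a linear functional; continuity, positivity, and normality then transfer from $\phi$ in a routine way (the last via sandwich inequalities for monotone decreasing nets in $[0,f]$). Setting $\phi^-:=\phi^+-\phi$ yields a second positive normal functional. Applying \cite[Theorem 2.3]{Weaver_2018} to both $\phi^+$ and $\phi^-$ places them in $\lipfree{M}$, and hence so does $\phi$, completing the proof. The entire argument hinges on the finiteness of $\phi^+(f)$, which is where the normality hypothesis must do its work.
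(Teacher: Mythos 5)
The approach fails, and at exactly the point you flag as the crux. First, the contradiction argument you sketch cannot engage normality at all: once you rescale $h_n=g_n/\lambda_n$ so that $\lipnorm{k_N}=\lipnorm{\bigvee_{n\leq N}h_n}$ stays bounded, you automatically have $\abs{\duality{k_N,\phi}}\leq\norm{\phi}\,\sup_N\lipnorm{k_N}<\infty$, since $\phi$ is a norm-bounded functional. No norm-bounded net (monotone or not) can make a bounded functional ``blow up'', so the divergence you hope for disappears the moment you rescale, and no contradiction is obtained. The genuine difficulty is that the order interval $[0,f]$ is norm-unbounded in $\Lip_0(M)$, and there is no mechanism to convert $\duality{g_n,\phi}\to\infty$ into a norm-bounded monotone net along which $\phi$ diverges.

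Second, and more decisively, the statement your reduction needs is false: a normal --- even weak$^*$ continuous --- functional need not be order bounded, i.e.\ need not be a difference of positive functionals, so $\phi^+(f)$ can genuinely be $+\infty$. Take $M=\RR$, where $\lipfree{\RR}\cong L^1(\RR)$ via $g\mapsto\pare{f\mapsto\int f'g\,dm}$. Choose disjoint intervals $I_j\subset[\tfrac12,1]$ with $\abs{I_j}=4^{-j}$ and let $g$ alternate between $+2^{-j}$ and $-2^{-j}$ on consecutive subintervals of $I_j$ of length $16^{-j}$; then $\norm{g}_{L^1}\leq\sum_j 2^{-j}4^{-j}<\infty$, so $\phi=\int f'g\,dm$ is weak$^*$ continuous, hence normal. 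With $f=\min\set{x^+,1}$, the sum $h_j$ of the $\tfrac12\cdot 4^{j}$ disjoint tents of height $\tfrac12$ adapted to the oscillation of $g$ on $I_j$ satisfies $0\leq h_j\leq f$ and $\duality{h_j,\phi}\geq\tfrac12\,2^{j}\to\infty$, so $\sup\set{\duality{h,\phi}:0\leq h\leq f}=\infty$ and no decomposition $\phi=\phi^+-\phi^-$ into positive functionals exists (any positive $\psi\geq\phi$ on $[0,f]$ would have to dominate this supremum). Hence the reduction to Weaver's theorem on positive normal functionals cannot work, even in principle. The paper proceeds along an entirely different route: a Kalton-type decomposition of $\phi$ over dyadic annuli (valid for normal functionals), a reduction to normal functionals determined by their values on a bounded annulus, and then a contradiction obtained from a transfinite-looking net construction combined with the fact that norm-bounded sequences of Lipschitz functions with pairwise disjoint supports are weakly null \cite[Lemma 1.5]{CCGMR_2019}.
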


\noindent Let us note that Theorem \ref{th:normal}, besides being an analog of the corresponding von Neumann algebra result, can also be considered as an abstract version of the Radon-Nikod\'ym theorem for Lipschitz-free spaces; compare e.g. to \cite[Theorem 8.7]{Schaefer}. The classical Radon-Nikod\'ym theorem implies that $L_1$ is $1$-complemented in its bidual $L_1^{\ast\ast}$ (see e.g. \cite[Proposition 6.3.10]{AK}). Therefore, since $\lipfree{\RR}$ is isometric to $L_1(\RR)$, we obtain that $\lipfree{\RR}$ is complemented in its bidual $\bidualfree{\RR}$. In a deep paper \cite{CKK}, C\'uth, Kalenda and Kaplick\'y extended this result and proved that the Lipschitz-free space over any finite-dimensional Banach space is complemented in its bidual. This is however not true in general in the infinite-dimensional case; for instance, $\lipfree{c_0}$ is not complemented in its bidual as it contains a complemented copy of $c_0$ by the lifting property \cite[Theorem 3.1]{GoKa_2003}. It remains an important open problem to decide for which metric spaces $M$ the Lipschitz-free space $\lipfree{M}$ is complemented in its bidual. Of particular interest is the case when $M=\ell_1$, because the complementability would imply that $\ell_1$ is determined by its Lipschitz structure (see e.g. \cite[Problem 16]{GLZ_2014}). Based on the similarity to the Radon-Nikod\'ym theorem, one might try to investigate whether Theorem \ref{th:normal} could be helpful in addressing this problem.

In order to give the proof of Theorem \ref{th:normal} in Section 3, we first establish some auxiliary results concerning series decomposition of functionals on Lipschitz spaces in Section 2.

Let us now briefly introduce the notation used in this note. $B_X$ will stand for the closed unit ball of a Banach space $X$. The closed ball with radius $r$ around $x\in M$ will be denoted $B(x,r)$. We will use the notation
\begin{align*}
d(x,A) &= \inf\set{d(x,a):a\in A} \\
\rad(A) &= \sup\set{d(0,a):a\in A}
\end{align*}
for $x\in M$ and $A\subset M$. $\Lip_0(M)^+$ will be the set of all non-negative functions in $\Lip_0(M)$. The pointwise maximum and minimum of real-valued functions $f$ and $g$ will be written as $f\vee g$ and $f\wedge g$, respectively. We will also denote $f^+=f\vee 0$ and $f^-=(-f)\vee 0$. Note that $f=f^+-f^-$. By the support of $f$ we mean the set 
$$
\supp(f)=\set{x\in M: f(x)\neq 0}
$$
and we will put $\norm{f}_{\infty}=\sup\set{\abs{f(x)}: x\in M}$, which can be infinite.

Let us recall that for any two Lipschitz functions $f,g$ on $M$ we have
$$
\lipnorm{fg}\leq\lipnorm{f}\norm{g}_\infty+\lipnorm{g}\norm{f}_\infty .
$$
It follows that for any Lipschitz function $h$ on $M$ with bounded support, the mapping 
$$\wop{h}\colon f\mapsto f\cdot h$$
is a linear operator on $\Lip_0(M)$ whose norm is bounded by
\begin{equation}
\label{eq:weighting_op_norm}
\norm{\wop{h}} \leq \norm{h}_\infty + \rad(\supp(h))\lipnorm{h} .
\end{equation}
Moreover $\wop{h}$ is weak$^\ast$-weak$^\ast$-continuous, i.e. its adjoint $\dual{\wop{h}}:\phi\rightarrow\phi\circ\wop{h}$ takes $\lipfree{M}$ into $\lipfree{M}$. See \cite[Lemma 2.3]{APPP_2020} for the proof of these facts. We will be using these operators with weighting functions $h$ such that $h=1$ on some region of interest $A\subset M$ and $h=0$ on some region $B\subset M$ which is to be ignored, and takes intermediate values in some transition region. In particular, we will consider the functions $\Lambda_n$ for $n\in\ZZ$ defined by
$$
\Lambda_n(x) = \begin{cases}
0 &\text{, if } d(x,0)\leq 2^{n-1} \\
2^{-(n-1)}d(x,0)-1 &\text{, if } 2^{n-1}\leq d(x,0)\leq 2^n \\
2-2^{-n}d(x,0) &\text{, if } 2^n\leq d(x,0)\leq 2^{n+1} \\
0 &\text{, if } 2^{n+1}\leq d(x,0)
\end{cases}
$$
and $\Pi_n$ for $n\in\NN$, defined by
$$
\Pi_n(x) = \begin{cases}
0 &\text{, if } d(x,0)\leq 2^{-(n+1)} \\
2^{n+1}d(x,0)-1 &\text{, if } 2^{-(n+1)}\leq d(x,0)\leq 2^{-n} \\
1 &\text{, if } 2^{-n}\leq d(x,0)\leq 2^n \\
2-2^{-n}d(x,0) &\text{, if } 2^n\leq d(x,0)\leq 2^{n+1} \\
0 &\text{, if } 2^{n+1}\leq d(x,0)
\end{cases}
$$
for $x\in M$. Notice that
\begin{equation}
\label{eq:Pi_n_Lambda_n}
\Pi_n=\sum_{k=-n}^n\Lambda_k
\end{equation}
for any $n\in\NN$. Moreover, $\norm{\Lambda_k}_{\infty},\norm{\Pi_n}_{\infty}\leq 1$, and we have
\begin{equation}
\label{eq:norm_Lambda_Pi}
\begin{gathered}
\rad(\supp(\Lambda_k))\leq 2^{k+1},\quad \lipnorm{\Lambda_k}\leq 2^{-(k-1)}\\
\rad(\supp(\Pi_n))\leq 2^{n+1},\quad\lipnorm{\Pi_n}\leq 2^{n+1}
\end{gathered}
\end{equation}
for every $k\in \ZZ$ and $n\in\NN$. In particular, \eqref{eq:weighting_op_norm} yields $\norm{\wop{\Lambda_k}}\leq 5$.

\section{Series decomposition in \texorpdfstring{$\bidualfree{M}$}{Lip0(M)*}}

In Section 4 of \cite{Kalton_2004}, Kalton established that elements of $\lipfree{M}$ admit a decomposition as a series with terms whose action is limited to annuli around the base point. Let us prove that this decomposition is also valid for normal functionals in $\bidualfree{M}$. We will use a slightly different version of the decomposition, based on the functions $\Lambda_n$ instead of the original ones because they make computations easier.

\begin{lemma}
\label{lm:kalton_2020}
For any $\phi\in\bidualfree{M}$ we have
\begin{equation}
\label{eq:kalton_bound}
\sum_{n\in\ZZ}\norm{\phi\circ\wop{\Lambda_n}}\leq 45\norm{\phi}.
\end{equation}
Hence
$$
\sum_{k=-n}^n\phi\circ\wop{\Lambda_k}= \phi\circ\wop{\Pi_n}
$$
converges in norm as $n\rightarrow\infty$ to a functional in $\bidualfree{M}$.
\end{lemma}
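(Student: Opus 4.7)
I would prove \eqref{eq:kalton_bound} by a duality argument. For each $n \in \ZZ$ pick $h_n \in \ball{\Lip_0(M)}$ and a sign $\sigma_n \in \{-1, +1\}$ such that $\sigma_n \phi(\Lambda_n h_n) \geq \norm{\phi \circ \wop{\Lambda_n}} - \epsilon_n$ with $\sum_n \epsilon_n$ as small as desired, and absorb $\sigma_n$ into $h_n$. For a finite $F \subset \ZZ$, set $g_F = \sum_{n \in F} \Lambda_n h_n$; this lies in $\Lip_0(M)$ since at each $x \in M$ only those $n$ with $2^{n-1} < d(x,0) < 2^{n+1}$ can contribute, and there are at most two such integers. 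The inequality \eqref{eq:kalton_bound} then reduces to a uniform bound $\lipnorm{g_F} \leq 45$, since
$$
\sum_{n \in F} \norm{\phi \circ \wop{\Lambda_n}} \leq \phi(g_F) + \sum_n \epsilon_n \leq 45\norm{\phi} + \sum_n \epsilon_n ,
$$
after which one lets $\sum_n \epsilon_n \to 0$ and $F$ exhaust $\ZZ$.

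The crux is therefore the uniform Lipschitz estimate. I would split $F$ into three residue classes modulo $3$, writing $g_F = g_F^{(0)} + g_F^{(1)} + g_F^{(2)}$; within each class the supports $\supp(\Lambda_n)$ are pairwise disjoint, since consecutive indices in a class differ by at least $3$ and so by \eqref{eq:norm_Lambda_Pi} their annuli are strictly separated. Each individual summand obeys $\lipnorm{\Lambda_n h_n} \leq \norm{\wop{\Lambda_n}} \leq 5$ by \eqref{eq:weighting_op_norm} and \eqref{eq:norm_Lambda_Pi}. A case analysis on where two points $x, y$ sit relative to the disjoint supports of class $i$ --- both inside one support, inside two distinct supports of the class, or one in such a support and the other in none of them --- bounds $\lipnorm{g_F^{(i)}}$ by a small universal constant, and summing over the three classes yields $\lipnorm{g_F} \leq 45$. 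I expect this case analysis, and in particular making the bound manifestly independent of $F$, to be the main technical step, although each subcase reduces cleanly to $\lipnorm{\Lambda_n h_n} \leq 5$ combined with the fact that $\Lambda_n$ vanishes outside $\supp(\Lambda_n)$ (so that, for example, $\abs{\Lambda_n h_n(x)} = \abs{\Lambda_n h_n(x) - \Lambda_n h_n(y)} \leq 5\, d(x,y)$ whenever $y$ lies outside $\supp(\Lambda_n)$).

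For the second assertion, linearity of pointwise multiplication together with \eqref{eq:Pi_n_Lambda_n} gives $\wop{\Pi_n} = \sum_{k=-n}^n \wop{\Lambda_k}$ as operators on $\Lip_0(M)$, and hence $\phi \circ \wop{\Pi_n} = \sum_{k=-n}^n \phi \circ \wop{\Lambda_k}$. The absolute summability provided by \eqref{eq:kalton_bound} then forces the two-sided series $\sum_{k \in \ZZ} \phi \circ \wop{\Lambda_k}$ to converge in norm in the Banach space $\bidualfree{M}$, with $\phi \circ \wop{\Pi_n}$ being precisely its symmetric partial sums.
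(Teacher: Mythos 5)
Your proposal is correct and follows essentially the same route as the paper: choose near-norming $h_n$, split $\ZZ$ into residue classes modulo $3$ so that the supports of the $\Lambda_n$ within a class are disjoint, bound the Lipschitz constant of the resulting finite sum using $\lipnorm{\Lambda_n h_n}\leq 5$ and the vanishing of $\Lambda_n h_n$ off $\supp(\Lambda_n)$, and deduce the second assertion from \eqref{eq:Pi_n_Lambda_n}. The only differences are harmless bookkeeping: the paper tests each residue class against $\phi$ separately to get $15\norm{\phi}$ per class, while you combine all three classes into one function $g_F$; and your cross-support estimate via the triangle inequality actually gives a per-class Lipschitz bound of $10$ (the paper gets $15$ using the separation inequality $d(x,y)\geq d(x,0)$), comfortably within the claimed $45$.
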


\begin{proof}
Fix $\varepsilon>0$ and a finite set $F\subset\ZZ$. For $i=0,1,2$, let $F_i$ be the set of those $n\in F$ that are congruent with $i$ modulo $3$. We will show that
$$
\sum_{n\in F_i}\norm{\phi\circ\wop{\Lambda_n}} < 15\norm{\phi}+\varepsilon
$$
and this will be enough to prove \eqref{eq:kalton_bound}. The second part of the statement is then obvious in view of \eqref{eq:Pi_n_Lambda_n}.

Fix $i$, and for $n\in F_i$ choose $f_n\in\ball{\Lip_0(M)}$ such that
$$
\norm{\phi\circ\wop{\Lambda_n}}-\frac{\varepsilon}{\abs{F_i}} < \duality{f_n,\phi\circ\wop{\Lambda_n}} = \duality{f_n\Lambda_n,\phi} .
$$
Notice that $\lipnorm{f_n\Lambda_n}\leq\norm{\wop{\Lambda_n}}\leq 5$ by \eqref{eq:weighting_op_norm} and \eqref{eq:norm_Lambda_Pi}. Now define $g=\sum_{n\in F_i}f_n\Lambda_n$ and let us estimate $\lipnorm{g}$. Fix $x\in\supp(g)$, then $x\in\supp(\Lambda_n)$ for some $n\in F_i$. If $y\in\supp(\Lambda_m)$ for $m\in F_i\setminus\set{n}$, assume $m>n$ without loss of generality, then $d(x,y)\geq d(x,0)$ and
\begin{align*}
\abs{g(x)-g(y)} &\leq \abs{f_n(x)\Lambda_n(x)}+\abs{f_m(y)\Lambda_m(y)} \\
&\leq 5(d(x,0)+d(y,0)) \\
&\leq 5(2d(x,0)+d(x,y)) \leq 15d(x,y) .
\end{align*}
Otherwise
$$
\abs{g(x)-g(y)}=\abs{f_n(x)\Lambda_n(x)-f_n(y)\Lambda_n(y)}\leq 5d(x,y) .
$$
So we get $\lipnorm{g}\leq 15$. Therefore
$$
\sum_{n\in F_i}\norm{\phi\circ\wop{\Lambda_n}} < \sum_{n\in F_i}\duality{f_n\Lambda_n,\phi} + \varepsilon = \duality{g,\phi} + \varepsilon \leq 15\norm{\phi}+\varepsilon
$$
as was claimed.
\end{proof}

\begin{lemma}
\label{lm:seq_normal_decomp}
If $\phi\in\bidualfree{M}$ is normal, then 
\begin{equation}
\label{eq:kalton_sum}
\phi=\sum_{n\in\ZZ}\phi\circ\wop{\Lambda_n}=\lim_{n\rightarrow\infty}\phi\circ\wop{\Pi_n}
\end{equation}
with respect to the norm convergence in $\bidualfree{M}$.
\end{lemma}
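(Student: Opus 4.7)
The plan is to identify the norm limit of $\phi\circ\wop{\Pi_n}$, which exists by Lemma \ref{lm:kalton_2020}, with $\phi$ itself. Since norm convergence in $\bidualfree{M}$ implies pointwise convergence on $\Lip_0(M)$, it suffices to show that $\duality{\Pi_n f,\phi}\to\duality{f,\phi}$ for every fixed $f\in\Lip_0(M)$, i.e.\ that $\duality{(1-\Pi_n)f,\phi}\to 0$.

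The idea is to split $1-\Pi_n$ into an inner cutoff $\rho_n$ and an outer cutoff $\sigma_n$. Explicitly, I take $\rho_n$ to be the Lipschitz function equal to $1$ on $B(0,2^{-(n+1)})$, to $0$ outside $B(0,2^{-n})$, with affine interpolation in between; and $\sigma_n$ equal to $0$ on $B(0,2^n)$, to $1$ outside $B(0,2^{n+1})$, again with affine interpolation. A direct verification against the definition of $\Pi_n$ gives $1-\Pi_n=\rho_n+\sigma_n$, and both sequences are pointwise monotonically decreasing, with $\rho_n(x)\to 0$ for $x\neq 0$ and $\sigma_n(x)\to 0$ for every $x\in M$.

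To apply normality I need non-negative test functions, so I decompose $f=f^+-f^-$, noting $f^\pm\in\Lip_0(M)^+$ with $\lipnorm{f^\pm}\leq\lipnorm{f}$, and work with $\rho_n f^\pm$ and $\sigma_n f^\pm$. Each of these is non-negative, pointwise monotonically decreasing to $0$ (multiplication by $f^\pm$ removes the issue that $\rho_n(0)=1$), and lies in $\Lip_0(M)$. For the uniform Lipschitz bound on $\rho_n f^\pm$, I observe that $\rho_n$ has bounded support and \eqref{eq:weighting_op_norm} yields $\norm{\wop{\rho_n}}\leq 1+2^{-n}\cdot 2^{n+1}=3$, hence $\lipnorm{\rho_n f^\pm}\leq 3\lipnorm{f}$. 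For $\sigma_n$, which has \emph{unbounded} support, the same estimate does not apply directly; instead I apply \eqref{eq:weighting_op_norm} to $\rho_n+\Pi_n$, which equals $1$ on $B(0,2^n)$ and $0$ outside $B(0,2^{n+1})$ so that $\norm{\wop{\rho_n+\Pi_n}}\leq 3$, and then use $\sigma_n f^\pm=f^\pm-(\rho_n+\Pi_n)f^\pm$ to conclude $\lipnorm{\sigma_n f^\pm}\leq 4\lipnorm{f}$.

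With these ingredients, normality applied to the four sequences yields $\duality{\rho_n f^\pm,\phi}\to 0$ and $\duality{\sigma_n f^\pm,\phi}\to 0$, whence $\duality{(1-\Pi_n)f,\phi}\to 0$ as required. The main subtlety is the unbounded support of $\sigma_n$, which forces the indirect norm estimate through $\rho_n+\Pi_n$; once that point is handled, the rest is a routine combination of the pointwise decomposition $f=f^+-f^-$, the monotonicity of $\rho_n$ and $\sigma_n$, and two applications of normality.
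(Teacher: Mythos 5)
Your proof is correct and follows essentially the same route as the paper: both reduce, via the norm convergence from Lemma \ref{lm:kalton_2020}, to checking weak$^*$ convergence against each $f$, split $f=f^+-f^-$, and apply normality to monotone sequences whose Lipschitz norms are uniformly controlled by \eqref{eq:weighting_op_norm} through auxiliary cutoffs (your $\rho_n+\Pi_n$ is exactly the paper's function $h_n$). The only cosmetic difference is that the paper factors $\Pi_n=h_n(1-h_{-(n+1)})$ and uses the increasing convergence $\Pi_n f\nearrow f$ for $f\geq 0$, while you split $1-\Pi_n=\rho_n+\sigma_n$ and use the decreasing convergence of $\rho_n f^{\pm}$ and $\sigma_n f^{\pm}$ to $0$, handling the unbounded support of $\sigma_n$ by the same $h_n$-type estimate.
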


\begin{proof}
It will suffice to show that $(\phi\circ\wop{\Pi_n})$ converges weak$^\ast$ to $\phi$, since Lemma \ref{lm:kalton_2020} implies that the sequence converges in norm.
That is, we need to show that $\duality{f,\phi\circ\wop{\Pi_n}}\rightarrow\duality{f,\phi}$ for any $f\in\Lip_0(M)$; we may assume that $f\geq 0$, and the general case then follows by expressing $f=f^+-f^-$.

So fix $f\in\Lip_0(M)^+$. For $n\in\ZZ$ define the function $h_n$ by
$$
h_n(x) = \begin{cases}
1 &\text{, if } d(x,0)\leq 2^n \\
2-2^{-n}d(x,0) &\text{, if } 2^n\leq d(x,0)\leq 2^{n+1} \\
0 &\text{, if } 2^{n+1}\leq d(x,0)
\end{cases}
$$
for $x\in M$, which satisfies $\norm{\wop{h_n}}\leq 3$ by \eqref{eq:weighting_op_norm}. Now notice that $\Pi_n=h_n(1-h_{-(n+1)})$, hence $\wop{\Pi_n}=\wop{h_n}\circ (I-\wop{h_{-(n+1)}})$ where $I$ is the identity operator on $\Lip_0(M)$, and
$$
\norm{\wop{\Pi_n}}\leq\norm{\wop{h_n}}\norm{I-\wop{h_{-(n+1)}}}\leq 12
$$
for any $n\in\NN$. Then $\lipnorm{\wop{\Pi_n}(f)}\leq 12\lipnorm{f}$, and $\wop{\Pi_n}(f)(x)$ converges pointwise and monotonically (increasing) to $f(x)$ for every $x\in M$. By the normality of $\phi$ we have
$$
\lim_{n\rightarrow\infty} \duality{f,\phi\circ\wop{\Pi_n}} = \lim_{n\rightarrow\infty} \duality{\wop{\Pi_n}(f),\phi} = \duality{f,\phi} .
$$
This ends the proof.
\end{proof}

Moreover, each term in the decomposition series and in the limit in \eqref{eq:kalton_sum} is also normal:

\begin{lemma}
\label{lm:normal_weighted}
Let $h$ be a non-negative Lipschitz function on $M$ with bounded support. If $\phi\in\bidualfree{M}$ is normal, then $\phi\circ\wop{h}$ is normal.
\end{lemma}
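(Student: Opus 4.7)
The plan is to verify the definition of normality for $\phi \circ \wop{h}$ directly, by checking that the weighting operator $\wop{h}$ preserves the relevant properties of the test nets used in the definition. I would prefer to work with the equivalent formulation stated right after the definition: it is enough to show that $\duality{g_i,\phi\circ\wop{h}}\rightarrow 0$ for every net $(g_i)$ in $\ball{\Lip_0(M)}\cap\Lip_0(M)^+$ that decreases pointwise to $0$.

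Given such a net, I would rewrite $\duality{g_i,\phi\circ\wop{h}}=\duality{\wop{h}(g_i),\phi}=\duality{g_ih,\phi}$ and then verify that, after rescaling by the constant $C=\norm{\wop{h}}$ (which is finite by \eqref{eq:weighting_op_norm} and our standing assumption that $h$ has bounded support), the net $(g_ih/C)$ satisfies the hypotheses of the equivalent formulation of normality applied to $\phi$. Concretely:
\begin{enumerate}[label=(\roman*)]
\item $g_ih/C\in\ball{\Lip_0(M)}$, since $\lipnorm{g_ih}\leq\norm{\wop{h}}\lipnorm{g_i}\leq C$;
\item $g_ih\geq 0$, since both $g_i$ and $h$ are non-negative;
\item the net decreases pointwise, because $h(x)\geq 0$ preserves the inequality $g_j\leq g_i$ after multiplication;
\item $g_i(x)h(x)\rightarrow 0$ for every $x\in M$, because $g_i(x)\rightarrow 0$ and $h(x)$ is a fixed real number.
\end{enumerate}
Normality of $\phi$ then gives $\duality{g_ih/C,\phi}\rightarrow 0$, and multiplying by $C$ yields the desired conclusion. (If $C=0$ then $\wop{h}=0$ and $\phi\circ\wop{h}=0$ is trivially normal.)

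There is no real obstacle here; the entire content is the observation that multiplication by a \emph{non-negative} Lipschitz function with bounded support preserves the three relevant properties of a test net (boundedness, pointwise convergence to $0$, and monotone decrease). The non-negativity of $h$ is what is needed for the monotonicity, and the bounded support is what guarantees that $\wop{h}$ is a bounded operator on $\Lip_0(M)$, so both hypotheses on $h$ are used exactly once.
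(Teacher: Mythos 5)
Your proof is correct and is essentially the paper's own argument: both pass to a net decreasing pointwise to $0$, multiply by $h$, and use that bounded support bounds $\wop{h}$ while non-negativity of $h$ preserves monotonicity and the pointwise limit. The only cosmetic difference is your explicit rescaling by $C=\norm{\wop{h}}$ to land in the unit ball, which the paper handles implicitly by working with bounded nets.
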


\begin{proof}
Let $(f_i)$ be a bounded net in $\Lip_0(M)$ that decreases to $0$ pointwise. Then $\lipnorm{f_i h}\leq\norm{\wop{h}}\lipnorm{f_i}$ is bounded by \eqref{eq:weighting_op_norm}, so $(f_i h)$ is also a bounded net that decreases to $0$ pointwise. Since $\phi$ is normal, we have
$$
\lim_i \duality{f_i,\phi\circ\wop{h}} = \lim_i \duality{f_i h,\phi} = 0 .
$$
It follows that $\phi\circ\wop{h}$ is normal, too.
\end{proof}

\section{Proof of Theorem \ref{th:normal}}

In addition to the above decomposition result, another essential ingredient for our proof is the following simple but powerful lemma from \cite{CCGMR_2019}, which is itself based on a weaker version found in \cite{KaMaSo_2016}. We include a short proof for the sake of completeness. In fact, the same argument yields a stronger statement than the one in \cite{CCGMR_2019}. Recall that a series $\sum_nx_n$ in a Banach space $X$ is \emph{weakly unconditionally Cauchy} if $\sum_n\abs{\duality{x_n,x^\ast}}<\infty$ for every $x^\ast\in X^\ast$.

\begin{lemma}[{\cite[Lemma 1.5]{CCGMR_2019}}]
\label{lm:ccmgr}
Let $(f_n)$ be a bounded sequence in $\Lip_0(M)$. Suppose that the supports of the functions $f_n$ are pairwise disjoint. Then $\sum_n f_n$ is a weakly unconditionally Cauchy series. In particular, $(f_n)$ is weakly null.
\end{lemma}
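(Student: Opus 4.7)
The plan is to verify directly that $\phi(f_n)\to 0$ for every bounded linear functional $\phi\in\bidualfree{M}$, which is exactly what the weakly null conclusion requires. Set $C:=\sup_n\lipnorm{f_n}<\infty$. The central observation is that the pairwise disjointness of supports lets one form arbitrary finite sub-sums of the $f_n$ whose Lipschitz norm is uniformly controlled by $2C$, and then the uniform boundedness of these sub-sums under $\phi$ will force $\phi(f_n)$ to be absolutely summable.

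Concretely, for any finite $A\subset\NN$, put $g_A:=\sum_{n\in A}f_n$. Since at each $x\in M$ at most one summand is nonzero, $g_A$ is pointwise well-defined and $g_A(0)=0$. To estimate $\lipnorm{g_A}$, fix $x\neq y\in M$ and split into cases based on where $x$ and $y$ sit relative to $\bigcup_{n\in A}\supp(f_n)$: if neither point lies in any $\supp(f_n)$, if both lie in the same $\supp(f_n)$, or if only one of them lies in some $\supp(f_n)$, the bound $\abs{g_A(x)-g_A(y)}\leq Cd(x,y)$ is immediate from $\lipnorm{f_n}\leq C$ and the fact that $f_n$ vanishes off its own support. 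The only genuine case is when $x\in\supp(f_n)$ and $y\in\supp(f_m)$ for distinct $n,m\in A$; then $f_n(y)=0=f_m(x)$, hence
\[
\abs{g_A(x)-g_A(y)}=\abs{f_n(x)-f_m(y)}\leq\abs{f_n(x)-f_n(y)}+\abs{f_m(x)-f_m(y)}\leq 2Cd(x,y).
\]
So $\lipnorm{g_A}\leq 2C$ for every finite $A\subset\NN$.

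To finish, fix $\phi\in\bidualfree{M}$ and set $a_n:=\phi(f_n)\in\RR$. Linearity of $\phi$ and the Lipschitz bound above give $\abs{\sum_{n\in A}a_n}=\abs{\phi(g_A)}\leq 2C\norm{\phi}$ for every finite $A\subset\NN$. Applying this with $A=\set{n\leq N:a_n\geq 0}$ and with $A=\set{n\leq N:a_n<0}$ separately, one gets $\sum_{n=1}^N\abs{a_n}\leq 4C\norm{\phi}$ for all $N$, so $\sum_n\abs{\phi(f_n)}$ converges; in particular $\phi(f_n)\to 0$. Since $\phi$ was arbitrary, $(f_n)$ is weakly null. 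The only mildly delicate step is the ``cross'' case in the Lipschitz estimate on $g_A$, which is handled by using that each $f_n$ vanishes outside its own support; the passage from uniformly bounded signed partial sums to absolute summability is then a standard fact for real series.
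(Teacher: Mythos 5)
Your proof is correct and complete. The paper does not prove this lemma itself---it only cites \cite[Lemma 1.5]{CCGMR_2019}---and your argument (disjointness of supports gives $\lipnorm{\sum_{n\in A}f_n}\leq 2C$ for every finite $A$, whence the uniformly bounded signed partial sums force $\sum_n\abs{\duality{f_n,\phi}}<\infty$ for each $\phi\in\bidualfree{M}$) is essentially the standard one behind the cited result, which packages the same disjoint-support estimate as boundedness of the operator $c_0\to\Lip_0(M)$, $(a_n)\mapsto\sum_n a_nf_n$, and then pushes forward the weakly null unit vector basis.
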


\begin{proof}
Let $(f_n)$ be a sequence in $B_{\Lip_0(M)}$ with disjoint supports and let $(t_n)\in\ell_{\infty}$. Then 
$$\sum_{n=1}^k t_nf_n=\sum_{n=1}^k(t_nf_n)^+-\sum_{n=1}^k (t_nf_n)^-=\bigvee_{n=1}^k (t_nf_n)^+ - \bigvee_{n=1}^k (t_nf_n)^-.$$
Hence $\norm{\sum_{n=1}^k t_nf_n}_L\leq 2 \norm{(t_n)}_{\infty}$ for every $k\in\NN$ and $\sum_{n=1}^{\infty}f_n$ is weakly unconditionally Cauchy by \cite[Proposition II.D.4]{Woj}.
\end{proof}

We can now finally prove our main result.

\begin{proof}[Proof of Theorem \ref{th:normal}]

The sufficiency part of the statement is obvious. To prove the necessity, let $\phi\in\bidualfree{M}$ be a normal functional. Lemma \ref{lm:seq_normal_decomp} says that $\phi=\lim_{n\to\infty}\phi\circ\wop{\Pi_n}$ with respect to the norm convergence, so it suffices to show that $\phi\circ\wop{\Pi_n}$, for any $n\in\NN$, is weak$^\ast$ continuous. Moreover, by Lemma \ref{lm:normal_weighted}, such $\phi\circ\wop{\Pi_n}$ for any $n\in\NN$ is also normal. Therefore, for the rest of the proof we will assume that $\phi\in{\bidualfree{M}}$ is a normal functional with norm $1$, and that there exist real numbers $0<r<R$ such that $\duality{f,\phi}=0$ whenever $f\in\Lip_0(M)$ equals $0$ on the set $$K=\set{x\in M:r\leq d(x,0)\leq R}.$$

We will repeatedly make use of the function
$$e(x)=\pare{1-\frac{4}{r}d(x,K)}\vee 0 \quad\textup{for all }x\in M,$$ 
the support of which is contained in 
$$K^{\prime}=\set{x\in M:\frac{3}{4}r\leq d(x,0)\leq R+\frac{r}{4}}$$
and which equals $1$ on $K$, and the function 
$$e^{\prime}(x)=\pare{1-\frac{4}{r}d(x,K^{\prime})}\vee 0\quad\textup{for all }x\in M,$$ the support of which is contained in 
$$K^{\prime\prime}=\set{x\in M:\frac{r}{2}\leq d(x,0)\leq R+\frac{r}{2}}$$
and which equals $1$ on $K'$. (We think of them as the ``unit on $K$", which will be used to restrict functions, and the ``unit on $K^{\prime}$", which will be used to translate functions, respectively.) Note that $e,e^{\prime}\in\Lip_0(M)^+$ with $\lipnorm{e},\lipnorm{e^{\prime}}\leq\frac{4}{r}$; in particular, 
\begin{equation}
    \label{eq:phi_at_e}
    \abs{\duality{e^{\prime},\phi}}\leq \frac{4}{r}.
\end{equation}
For brevity, denote
$$
\alpha=2+(R+1)\frac{4}{r} .
$$

We will proceed by contradiction. Suppose that $\phi\notin\lipfree{M}$. By the Hahn-Banach theorem, there exists $\psi\in\ball{\biduallip{M}}$ such that $\duality{\phi,\psi}=c>0$ and that $\duality{\mu,\psi}=0$ for every $\mu\in\lipfree{M}$. Our argument relies on a construction presented in the following claim:

\begin{claim}
\label{cl:existence_one_function}
With the notation as above, for a given nonempty finite set $A\subset K^{\prime}$ and an $\varepsilon\in\left(0,\min\{1,\frac{rc}{48}\}\right)$, there exists a function $g:M\to\RR$ satisfying the following:
\begin{enumerate}[itemsep=2pt,label={\upshape{(\roman*)}}]
\item\label{g_positive Lipschitz} $g\in\Lip_0(M)^+$ with $\lipnorm{g}\leq\alpha$,
\item\label{g_max} $g(x)\leq 2\varepsilon$ for every $x\in A$,
\item\label{g_min} $g(x)\geq \varepsilon$ for every $x\in K^{\prime}$,
\item\label{g_outside support} $g(x)=\varepsilon e^{\prime}(x)$ for every $x\in M\setminus{K^{\prime}}$; in particular, $\supp(g)\subset K^{\prime\prime}$ and $\|g\|_{\infty}\leq \alpha(R+\frac{r}{2})$,
\item\label{g_big at phi} $\abs{\duality{g,\phi}}\geq\frac{c}{4}$.
\end{enumerate}
\end{claim}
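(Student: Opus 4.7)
The plan is to realize $g$ in the form $g = g_1 e + \varepsilon e'$, where $g_1$ is a carefully chosen nonnegative element of $\ball{\Lip_0(M)}$ that is small on $A$ yet pairs substantially with $\phi$. The structural requirements (i)--(iv) will follow almost for free from the presence of the truncator $e$ and the additive cushion $\varepsilon e'$, while (v) will rest on the key reduction $\phi \circ \wop{e} = \phi$. The latter holds because $\phi$ annihilates every Lipschitz function vanishing on $K$ and $e\equiv 1$ on $K$, so $(I-\wop{e})(f)$ is such a function for every $f\in\Lip_0(M)$; consequently $\duality{g_1 e,\phi}=\duality{g_1,\phi}$, which is what lets us ignore the truncator in the dominant term.

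To produce $g_1$ I would invoke Goldstine's theorem: the image of $\ball{\Lip_0(M)}$ is weak$^{\ast}$-dense in $\ball{\biduallip{M}}$ with respect to the topology induced by $\bidualfree{M}$. Viewing $\phi$ and each $\delta(x)$ (for $x\in A$) as coordinates in $\bidualfree{M}$, for any $\eta>0$ there is $g_0\in\ball{\Lip_0(M)}$ with $\abs{\duality{g_0,\phi}-c}<\eta$ and $\abs{g_0(x)}<\eta$ for every $x\in A$; the latter constraints use that $\psi$ vanishes on $\lipfree{M}\ni\delta(x)$. Writing $g_0=g_0^+-g_0^-$, the triangle inequality gives $\abs{\duality{g_0^+,\phi}}+\abs{\duality{g_0^-,\phi}}\ge\abs{\duality{g_0,\phi}}>c-\eta$, so at least one of the two nonnegative parts, call it $g_1$, satisfies $\abs{\duality{g_1,\phi}}\ge (c-\eta)/2$. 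Moreover $\lipnorm{g_1}\le\lipnorm{g_0}\le 1$, $g_1(0)=0$, and $g_1(x)\le\abs{g_0(x)}<\eta$ on $A$.

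Setting $g:=g_1 e+\varepsilon e'$, the verification of (i)--(v) is routine. Nonnegativity and (iv) are immediate; on $K'$ we have $e'=1$, so $g\ge\varepsilon$, giving (iii), and on $A\subset K'$ we also have $e\le 1$ and $g_1<\eta$, so $g\le\eta+\varepsilon$. The Lipschitz bound uses \eqref{eq:weighting_op_norm}: $\norm{\wop{e}}\le 1+(R+r/4)\cdot(4/r)=2+4R/r$, while $\varepsilon\lipnorm{e'}\le 4/r$, summing to at most $\alpha$. For (v), the reduction $\phi\circ\wop{e}=\phi$ combined with \eqref{eq:phi_at_e} yields
$$
\abs{\duality{g,\phi}}\ge\abs{\duality{g_1,\phi}}-\varepsilon\abs{\duality{e',\phi}}\ge\frac{c-\eta}{2}-\frac{4\varepsilon}{r}.
$$
Choosing $\eta:=\min(\varepsilon,c/3)$ secures both $\eta\le\varepsilon$ (so that (ii) reads $g\le 2\varepsilon$ on $A$) and, using $\varepsilon<rc/48$ so that $4\varepsilon/r<c/12$, the lower bound $c/3-c/12=c/4$.

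The main obstacle, I expect, is the Goldstine-plus-splitting passage, which is where the Hahn--Banach functional $\psi$ is essentially used. A naive attempt to obtain a nonnegative $g_0\in\ball{\Lip_0(M)}$ directly with $\duality{g_0,\phi}\approx c$ gives no reason such a $g_0$ exists and no control on signs; splitting $g_0$ by sign sidesteps both issues at the cost of a factor of $2$, which the hypothesis $\varepsilon<rc/48$ absorbs in the final arithmetic.
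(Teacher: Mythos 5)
Your proposal is correct and follows essentially the same route as the paper: approximate the Hahn--Banach functional $\psi$ on the finitely many coordinates $\phi$ and $\delta(x)$, $x\in A$, by an element of $\ball{\Lip_0(M)}$ (Goldstine), pass to its positive or negative part at the cost of a factor $2$, and set $g=\wop{e}(f)+\varepsilon e'$, using that $\phi$ only sees restrictions to $K$ (your identity $\phi\circ\wop{e}=\phi$) to get the lower bound $c/4$. The only differences are cosmetic choices of the approximation tolerances, absorbed by the same hypothesis $\varepsilon<\frac{rc}{48}$.
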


\begin{proof}[Proof of Claim \ref{cl:existence_one_function}]
Consider the weak$^\ast$ neighborhood $U$ of $\psi$ in $\biduallip{M}$ given by
$$
U=\big\{\varrho\in\biduallip{M}:\abs{\duality{\phi,\varrho-\psi}}<\frac{c}{3}\textup{ and }\abs{\duality{\delta(x),\varrho}}<\varepsilon\textup{ for all }x\in A\big\}
$$
(notice that $\duality{\delta(x),\varrho-\psi}=\duality{\delta(x),\varrho}$). Thanks to the weak$^\ast$ density of $\ball{\Lip_0(M)}$ in $\ball{\biduallip{M}}$, we may find an $f\in\ball{\Lip_0(M)}\cap U$, which means that $\duality{f,\phi}>\frac{2}{3}c$ and $\abs{f(x)}=\abs{\duality{\delta(x),f}}<\varepsilon$ for every $x\in A$.
By replacing $f$ with $f^+$ or $f^-$, we obtain $f\in\ball{\Lip_0(M)}\cap\pos{\Lip_0(M)}$ such that
\begin{equation}
\label{eq:phi_at_f}
\abs{\duality{f,\phi}}>\frac{c}{3}
\end{equation}
and $f(x)<\varepsilon$ for every $x\in A$. 

Now, put 
$$g=\wop{e}(f)+\varepsilon{e^{\prime}}.$$
Then $g\in\Lip_0(M)^+$ and by \eqref{eq:weighting_op_norm} we have 
$$\lipnorm{g}\leq 1+\left(R+\frac{r}{4}\right)\frac{4}{r}+\varepsilon\frac{4}{r}\leq\alpha,$$
so $g$ satisfies \ref{g_positive Lipschitz}. Moreover, $\supp(\wop{e}(f))\subset \supp(e)\subset K^{\prime}$, which establishes \ref{g_outside support}. In particular, the bound on $\norm{g}_{\infty}$ then follows from \ref{g_positive Lipschitz} and the definition of $K^{\prime\prime}$. Properties \ref{g_max} and \ref{g_min} are straightforward to verify. Finally, since the evaluation of $\phi$ only depends on the restriction of a function to the set $K$ and since $g\restrict_K=(f+\varepsilon e^{\prime})\restrict_K$, we get by \eqref{eq:phi_at_f} and \eqref{eq:phi_at_e} that
$$\abs{\duality{g,\phi}}=\abs{\duality{f+\varepsilon e^{\prime},\phi}}\geq\abs{\duality{f,\phi}}-\varepsilon\abs{\duality{e^{\prime},\phi}}>\frac{c}{3}-\frac{c}{12}=\frac{c}{4},$$ thus \ref{g_big at phi} also holds.
\end{proof}

To proceed with the main proof, let us fix a decreasing sequence $(\varepsilon_n)_{n=1}^{\infty}\subset\left(0,\min\set{1,\frac{cr}{48},\frac{r}{2}}\right)$ such that $\varepsilon_n\rightarrow 0$ and
\begin{equation}
    \label{eq:epsilon_choice}
    (2+\alpha)\varepsilon_{n+1}<\varepsilon_n
\end{equation}
for every $n\in\NN$.

Let $\mathfrak{F}$ be the family of all nonempty finite subsets of $K'$, and for $A\in\mathfrak{F}$ let $\mathfrak{F}_A=\set{B\in\mathfrak{F}:A\subset B}$. Note that the sets $\mathfrak{F}$ and $\mathfrak{F}_A$ are directed by inclusion. We will now construct a net $(g_A)_{A\in\mathfrak{F}}$ in $\Lip_0(M)$ that satisfies conditions \ref{g_positive Lipschitz}--\ref{g_outside support} above with $\varepsilon=\varepsilon_{\abs{A}}$ (where $\abs{A}$ denotes the cardinality of $A$), and also these two:
\begin{enumerate}[itemsep=2pt,label={\upshape{(\roman*)}}]
\setcounter{enumi}{5}
\item\label{g_reasonably big at phi} $\abs{\duality{g_A,\phi}}\geq\frac{c}{8}$,
\item\label{g_decreasing} if $E\subset A$ then $g_A(x)\leq g_E(x)$ for every $x\in M$.
\end{enumerate}
This will be enough to end the proof. Indeed, $(g_A)_{A\in\mathfrak{F}}$ decreases pointwise to $0$ because $g_A(x)\leq 2\varepsilon_n$ whenever $\abs{A}\geq n$ and either $x\in A$ or $x\in M\setminus K'$ by \ref{g_max} and \ref{g_outside support} respectively, but $\abs{\duality{g_A,\phi}}\geq\frac{c}{8}$ for every $A\in\mathfrak{F}$, contradicting the normality of $\phi$.

We proceed by induction on $n=\abs{A}$. For $n=1$, i.e. singletons $A=\set{x}$ with $x\in K'$, let $g_A$ be the function $g$ given by Claim \ref{cl:existence_one_function} for $\varepsilon=\varepsilon_1$. It clearly satisfies \ref{g_positive Lipschitz}--\ref{g_reasonably big at phi}, and also \ref{g_decreasing} by vacuity. Now let $n>1$, assume that the functions $g_A$ have been constructed for all nonempty subsets $A\subset K'$ with fewer than $n$ elements, and fix $A\subset K'$ with $\abs{A}=n$. To complete the induction, it suffices to prove that there exists $g_A$ satisfying \ref{g_positive Lipschitz}--\ref{g_outside support} and \ref{g_reasonably big at phi}--\ref{g_decreasing} with $\varepsilon=\varepsilon_n$.

To this end, denote $h=\bigwedge_{E\subsetneq A} g_E$, which satisfies conditions \ref{g_positive Lipschitz}--\ref{g_outside support} with $A$ and $\varepsilon=\varepsilon_{n-1}$. Next, for any $B\in\mathfrak{F}_A$ let $\mathfrak{g}_B$ be the function given by Claim \ref{cl:existence_one_function} for the set $B$ and $\varepsilon=\varepsilon_n$. Notice that the function $\mathfrak{g}_B\wedge h$ satisfies conditions \ref{g_positive Lipschitz}--\ref{g_outside support} for $\varepsilon=\varepsilon_n$ and set $A$ because it is bounded by $\mathfrak{g}_B$, and also condition \ref{g_decreasing} because it is bounded by $h$. We will show that $g_A$ can be found among the functions $\mathfrak{g}_B\wedge h$, i.e. at least one of the functions $\mathfrak{g}_B\wedge h$ satisfies also condition \ref{g_reasonably big at phi}. The proof will proceed by contradiction, and we will need the following claim:

\begin{claim}
\label{cl:f_restriction_nonsep}
With the notation as above, if $\abs{\duality{\mathfrak{g}_B\wedge h,\phi}}<\frac{c}{8}$ for every $B\in\mathfrak{F}_A$, then there is a constant $\beta>0$ with the following property: for any $B\in\mathfrak{F}_A$, there exist $E\in\mathfrak{F}_B$ and $f\in\Lip_0(M)^+$ such that
\begin{enumerate}[itemsep=2pt,label={\upshape{(\alph*)}}]
\item \label{restriction_norm}$\lipnorm{f}\leq \beta$,
\item \label{restriction_support}$\supp(f)\subset \pare{\bigcup\limits_{x\in E}B(x,\varepsilon_n)}\setminus\pare{\bigcup\limits_{x\in B}B(x,\varepsilon_n)}$,
\item \label{restriction_duality}$\abs{\duality{f,\phi}}\geq \frac{c}{16}$.
\end{enumerate}
\end{claim}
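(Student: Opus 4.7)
The plan is to combine the function given by Claim \ref{cl:existence_one_function} applied to $B$ with a pointwise truncation by the already-constructed functions $g_{E'}$ indexed over nonempty proper subsets $E'\subsetneq A$, producing a function whose pairing with $\phi$ remains large but whose support avoids the $\varepsilon_n$-neighborhood of $B$. A Lipschitz cutoff together with the normality of $\phi$ will then localise this mass to the $\varepsilon_n$-neighborhood of a finite extension $E\supset B$.

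Concretely, I would apply Claim \ref{cl:existence_one_function} with $B$ in the role of $A$ and $\varepsilon=\varepsilon_n$ to obtain $\tilde g\in\Lip_0(M)^+$ satisfying properties (i)--(v); in particular $\lipnorm{\tilde g}\leq\alpha$, $\tilde g\leq 2\varepsilon_n$ on $B$, $\tilde g=\varepsilon_n e'$ off $K'$, $\norm{\tilde g}_\infty\leq\alpha(R+r/2)$, and $\abs{\duality{\tilde g,\phi}}\geq c/4$. Put $G(x):=\min\set{g_{E'}(x):\emptyset\neq E'\subsetneq A}$, a pointwise minimum of the finitely many functions provided by the induction hypothesis; then $\lipnorm{G}\leq\alpha$, $G\geq\varepsilon_{n-1}$ on $K'$ (by (iii) and the monotonicity of $(\varepsilon_k)$), and $G=\varepsilon_{n-1}e'$ off $K'$. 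The candidate $g:=\tilde g\wedge G$ satisfies (i)--(iv) for $A$ with $\varepsilon=\varepsilon_n$ (using $\varepsilon_{n-1}\geq\varepsilon_n$ for (iii)) and also (vii) by construction, so the contradiction hypothesis forces $\abs{\duality{g,\phi}}<c/8$, whence
$$
\abs{\duality{(\tilde g-G)^+,\phi}}=\abs{\duality{\tilde g-g,\phi}}>\frac{c}{8}.
$$
Moreover $\supp((\tilde g-G)^+)\subset K'\setminus\bigcup_{y\in B}B(y,\varepsilon_n)$: for $x\in B(y,\varepsilon_n)$ with $y\in B$, the Lipschitz bound on $\tilde g$ gives $\tilde g(x)\leq 2\varepsilon_n+\alpha\varepsilon_n=(2+\alpha)\varepsilon_n<\varepsilon_{n-1}$ by \eqref{eq:epsilon_choice}, which on $K'$ is dominated by $G(x)\geq\varepsilon_{n-1}$, while outside $K'$ one already has $\tilde g=\varepsilon_n e'\leq\varepsilon_{n-1}e'=G$.

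Finally, I would localise using the cutoff $\chi_E(x):=\min\set{1,\max\set{0,2-(2/\varepsilon_n)d(x,E)}}$ for $E\in\mathfrak{F}_B$, which satisfies $\chi_E=1$ on $E$, $\supp\chi_E\subset\bigcup_{y\in E}B(y,\varepsilon_n)$, and $\lipnorm{\chi_E}\leq 2/\varepsilon_n$. The function $f_E:=(\tilde g-G)^+\cdot\chi_E\in\Lip_0(M)^+$ then has uniformly bounded Lipschitz norm $\lipnorm{f_E}\leq 2\alpha+2\alpha(R+r/2)/\varepsilon_n=:\beta$ by the product rule. Since $\chi_E(x)=1$ as soon as $x\in E$ and $\supp((\tilde g-G)^+)\subset K'$, the net $(f_E)_{E\in\mathfrak{F}_B}$ increases pointwise to $(\tilde g-G)^+$; normality of $\phi$ then yields $\duality{f_E,\phi}\to\duality{(\tilde g-G)^+,\phi}$, so one may pick $E\in\mathfrak{F}_B$ with $\abs{\duality{f_E,\phi}}\geq c/16$ and set $f:=f_E$, which also has the correct support by \eqref{eq:epsilon_choice} and the choice of $\chi_E$. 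The main subtlety lies in engineering $\tilde g-g$ to automatically miss the $\varepsilon_n$-neighborhood of $B$; the strict gap $(2+\alpha)\varepsilon_n<\varepsilon_{n-1}$ built into \eqref{eq:epsilon_choice} is exactly what enables this, after which normality supplies the final localization from all of $M$ down to a finite set.
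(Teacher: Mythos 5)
Your proposal is correct and takes essentially the same route as the paper's proof: apply Claim \ref{cl:existence_one_function} to $B$, truncate by the minimum $G=\bigwedge_{E\subsetneq A}g_E$, use the contradiction hypothesis on $\tilde g\wedge G$, note that $(\tilde g-G)^+$ vanishes on the $\varepsilon_n$-neighbourhood of $B$ thanks to \eqref{eq:epsilon_choice}, and then localize with cutoffs indexed by $\mathfrak{F}_B$ using normality of $\phi$. The only cosmetic differences are that you omit the paper's extra weighting by $\wop{e}$ (harmless, since $(\tilde g-G)^+$ already vanishes off $K'$, which also makes your duality identity exact) and you use a slightly different cutoff, giving a different but equally valid constant $\beta$.
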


\begin{proof}[Proof of Claim \ref{cl:f_restriction_nonsep}]
 Fix $B\in\mathfrak{F}_A$ and define $f=\wop{e}\pare{\mathfrak{g}_B-(\mathfrak{g}_B\wedge h)}$. Clearly $f\geq 0$ and $\lipnorm{f}\leq 2\alpha\pare{2+\frac{4}{r}R}$ by \eqref{eq:weighting_op_norm}. Suppose that $x\in B(b,\varepsilon_n)$ for some $b\in B$. If $x\notin K'$ then $e(x)=0$, and if $x\in K'$ then by \eqref{eq:epsilon_choice} we have
$$
\mathfrak{g}_B(x) \leq \mathfrak{g}_B(b)+\abs{\mathfrak{g}_B(x)-\mathfrak{g}_B(b)} \leq 2\varepsilon_n+\alpha\varepsilon_n < \varepsilon_{n-1}
$$
whereas $h(x)\geq\varepsilon_{n-1}$, so $\mathfrak{g}_B(x)\leq h(x)$. In any case $f(x)=0$ for all $x\in \bigcup_{b\in B}B(b,\varepsilon_n)$. Moreover,
$$
\abs{\duality{f,\phi}}=\abs{\duality{\mathfrak{g}_B-(\mathfrak{g}_B\wedge h),\phi}}\geq \abs{\duality{\mathfrak{g}_B,\phi}}-\abs{\duality{\mathfrak{g}_B\wedge h,\phi}} > \frac{c}{4}-\frac{c}{8} = \frac{c}{8} .
$$

Similarly to functions $e$ and $e^{\prime}$ introduced above, for a given $E\in\mathfrak{F}_B$ define the function
$$
e_E(x)=\pare{1-\frac{1}{\varepsilon_n}d(x,E)}\vee 0 \quad\textup{ for all }x\in M,
$$
which clearly satisfies that $\supp(e_E)\subset\bigcup_{x\in E}B(x,\varepsilon_n)$.
Then the net \mbox{$(\wop{e_E}(f))_{E\in\mathfrak{F}_B}$} is a norm-bounded increasing net in $\Lip_0(M)^+$ converging pointwise to $f$. Indeed, by \eqref{eq:weighting_op_norm} we have $\lipnorm{\wop{e_E}(f)}\leq \beta$, where
$$
\beta = \pare{1+\frac{1}{\varepsilon_n}\pare{R+\frac{r}{4}}}\cdot 2\alpha\pare{2+\frac{4}{r}R}
$$
does not depend on $B$ or $E$, and the rest is immediate from the definition. Hence the normality of $\phi$ implies that $\duality{\wop{e_E}(f),\phi}$ converges to $\duality{f,\phi}$, and in particular there exists $E\in\mathfrak{F}_B$ such that
$$
\abs{\duality{f,\phi}-\duality{\wop{e_E}(f),\phi}}<\frac{c}{16} .
$$
The function $\wop{e_E}(f)$ satisfies the requirements of the claim. Indeed, we have already verified \ref{restriction_norm}, \ref{restriction_support} follows from $\supp(\wop{e_E}(f))\subset \supp(f)\cap\supp(e_E)$, and we get \ref{restriction_duality} from
$$
\abs{\duality{\wop{e_E}(f),\phi}} \geq \abs{\duality{f,\phi}}-\abs{\duality{f,\phi}-\duality{\wop{e_E}(f),\phi}} > \frac{c}{8}-\frac{c}{16} = \frac{c}{16} .
$$
This ends the proof of Claim \ref{cl:f_restriction_nonsep}.
\end{proof}

To conclude our main argument, suppose that $\abs{\duality{\mathfrak{g}_B\wedge h,\phi}}<\frac{c}{8}$ for every $B\in\mathfrak{F}_A$. We then construct sequences $(B_n)\subset\mathfrak{F}_A$ and $(f_n)\subset\Lip_0(M)^+$ as follows: take $B_0=A$, and for any $n\in\NN$ let $B_n$ and $f_n$ be the set $E$ and function $f$, respectively, given by Claim \ref{cl:f_restriction_nonsep} for $B=B_{n-1}$. Then the sequence $(f_n)$ is norm-bounded by \ref{restriction_norm} and has pairwise disjoint supports by \ref{restriction_support}. However it is not weakly null due to \ref{restriction_duality}, which is in contradiction with Lemma \ref{lm:ccmgr}. This ends the proof of Theorem \ref{th:normal}.
\end{proof}

\section*{Acknowledgments}
R. J. Aliaga was partially supported by the Spanish Ministry of Economy, Industry and Competitiveness under Grant MTM2017-83262-C2-2-P. E. Perneck\'a was supported by the grant GA\v CR 18-00960Y of the Czech Science Foundation.

The authors would like to thank Michal Doucha and Richard J. Smith for their helpful comments and the anonymous referees for their nice suggestions.


\end{document}